\documentclass[preprint,12pt]{elsarticle}

\usepackage{hyperref,amsmath,amsfonts,mathrsfs}

\usepackage{graphicx}
\usepackage{amsfonts}
\usepackage{amsbsy}
\usepackage{amssymb}
\usepackage{amsmath,tikz,pgfplots}
\usetikzlibrary{decorations.pathmorphing}
\usepackage{tikz}
\numberwithin{equation}{section}
\usetikzlibrary{arrows.meta, positioning}
\usepackage{graphics}
\usepackage{graphicx}
\usepackage{amsfonts}
\usepackage{amsbsy}
\usepackage{amssymb}
\usepackage{amsmath,tikz,pgfplots}
\usetikzlibrary{decorations.pathmorphing}
\usepackage{graphics}
\def\bpsp{\begin{pspicture}}
\def\epsp{\end{pspicture}}

\newtheorem{theorem}{Theorem}[section]
\newtheorem{remark}[theorem]{Remark}
\newtheorem{example}[theorem]{Example}
\newtheorem{lemma}[theorem]{Lemma}
\newtheorem{corollary}[theorem]{Corollary}
\newtheorem{definition}[theorem]{Definition}
\newtheorem{proposition}[theorem]{Proposition}
\newtheorem{note}{Note}
\newtheorem{case}{Case}
\newtheorem{conjecture}{Conjecture}
\newtheorem{question}{Question}

\newcommand{\bea}{\begin{eqnarray}}
\newcommand{\eea}{\end{eqnarray}}
\newcommand{\beq}{\begin{eqnarray*}}
\newcommand{\eeq}{\end{eqnarray*}}

\def\m4{\mbox{\rm ~(mod $4$)}}

\def \bd{\begin{definition}}
\def \ed{\end{definition}}
\def \bqu{\begin{question}}
\def \equ{\end{question}}
\def \bcc{\begin{conjecture}}
\def \ecc{\end{conjecture}}
\def \bt{\begin{theorem}}
\def \et{\end{theorem}}
\def \bl{\begin{lemma}}
\def \el{\end{lemma}}
\def \bc{\begin{corollary}}
\def \ec{\end{corollary}}
\def \be{\begin{equation}}
\def \ee{\end{equation}}
\def \ben{\begin{enumerate}}
\def \een{\end{enumerate}}
\def \ba{\begin{array}}
\def \ea{\end{array}}
\def \bp{\begin{proposition}}
\def \ep{\end{proposition}}
\def \bx{\begin{example}}
\def \ex{\end{example}}
\def \br{\begin{remark}}
\def \er{\end{remark}}
\def \bdsc{\begin{description}}
\def \edsc{\end{description}}

\def \bn{\begin{case}}
\def \en{\end{case}}
\def \bnt{\begin{note}}
\def \ent{\end{note}}
\def\1{1\!\!1}

\def\mm2{\mbox{\rm ~(mod $2$)}}
\def\m4{\mbox{\rm ~(mod $4$)}}

\def\qed{\nolinebreak\hfill\rule{.2cm}{.2cm}\par\addvspace{.5cm}}

\def\m{\mu}

\def\1{\textbf{1}}
\def\0{\textbf{0}}

\journal{ABC}

\begin{document}

\begin{frontmatter}



\title{On digraphs determined by their singular values}
\author{Mushtaq A. Bhat$^{a}$}
\author{Peer Abdul Manan$^{b}$}

\address{Department of  Mathematics,  National Institute of Technology, Srinagar-190006, India}
\address {$^{a}$mushtaqab@nitsri.ac.in;~~~$^{b}$\text{mananab214@gmail.com}}

\begin{abstract}
 Let $D$ be an digraph of order $n$ with adjacency matrix $A(D)$ and outdegree matrix $\Delta^+=\Delta^+(D)$. Then the Laplacian and signless Laplacian matrices of $D$ are respectively defined as $L(D)=\Delta^+-A(D)$ and $Q(D)=\Delta^++A(D)$. In this paper, we compute singular values and an exact formula for the trace norm of Laplacian matrices of the directed path $\overrightarrow{P_n}$, the directed cycle $\overrightarrow{C_n}$ and all orientations of a star. We show that for a bipartite digraph $D$, the matrices $L(D)$ and $Q(D)$ have same singular values and use this to compute the singular values and trace norm of signless Laplacian matrices. We study the problem of determination of digraphs by their singular values and prove the directed path $\overrightarrow{P_n}$, the directed cycle $\overrightarrow{C_n}$ and oriented star $\overrightarrow{S}_n(n-1,0)$ are determined by their Laplacian and signless Laplacian singular values but are not determined by their adjacency singular values.
\end{abstract}

\vskip 0.2 true cm

\begin{keyword} Digraph, Singular value, Trace norm, Laplacian Matrix, Signless Laplacian Matrix.

\vskip 0.2 true cm


$MSC$: 05C20, 05C50 

\end{keyword}

\end{frontmatter}

\section{\bf Introduction}
A directed graph (or simply digraph) is a pair $D=(\mathcal{V}, \mathcal{A})$, where $\mathcal{V}$ is a finite set whose elements are called vertices and $\mathcal{A}$ is a set of ordered pairs of elements of $\mathcal{V}$. The elements of $\mathcal{A}$ are known as arcs. An oriented graph is digraph which has no pair of symmetric arcs. We assume our digraphs are simple i.e., free from parallel arcs and loops. A graph $G$ can be identified with a bi-directed graph or symmetric digraph $\overleftrightarrow {G}$ obtained by replacing each edge $e$ of $G$ by a pair of symmetric arcs. In a digraph $D$, an arc from a vertex $u$ to $v$ will be denoted by $uv$. In this case, we say $u$ is the tail and $v$ is the head of arc $uv$. For a vertex $u$ in a digraph $D$, the number of arcs with vertex $u$ as the tail is known as the outdegree of $u$ and we denote it by $d_u^+$ and the number of arcs with $u$ as the head is known as the indegree of $u$ and we denote it by $d_u^{-}$. If $D$ is a digraph of order $n$ with $k (1\le k\le n)$ distinct outdegrees in decreasing order as $d_1^+, d_2^+,\dots, d_k^+ $ having respective multiplicities $m_1, m_2,\dots m_k$, then we write the outdegree sequence of $D$ as $[{d_1^+}^ {m_1}, {d_2^+}^ {m_2},\dots, {d_k^+}^ {m_k}]$. Ganie and Pirzada \cite{gp} defined the first outdegree Zagreb index of a digraph $D$ of order $n$ as $Zg^+(D)=\sum\limits_{i=1}^n(d_i^+)^2$. A digraph $D$ is weakly connected if its underlying graph (obtained by removing the directions on arcs and replacing each cycle of length two by an edge) is connected. A digraph $D$ is strongly connected if for any two vertices $u$ and $v$ of $D$, there are directed paths from $u$ to $v$ and $v$ to $u$. We call a digraph to be connected if it is weakly or strongly connected.\\
\indent Let $D$ be a digraph with vertex set $\mathcal{V}=\{v_1,v_2,\dots,v_n\}$. Then the adjacency matrix $A(D)=(a_{ij})$ of $D$ is a square matrix of order $n$ with $a_{ij}=1$ if there is an arc from vertex  $v_i$ to vertex $v_j$ and zero, otherwise. Let ${\Delta}^+={\Delta}^+(D)=\mbox{diag}~(d^+_1,d^+_2,d^+_3,\dots,d^+_n)$, where $d^+_i=d^+_{v_i}$, be the outdegree matrix of $D$. Then the Laplacian and signless Laplacian matrices of $D$ are respectively defined as $L(D)={\Delta}^{+}-A(D)$ and $Q(D)={\Delta}^{+}+A(D)$.
Given a matrix $N\in M_n(\mathbb{C})$, the matrices $NN^{*}$ and $N^{*}N$ are positive semidefinite and have same nonnegative eigenvalues. The positive square roots of eigenvalues of $NN^{*}$ (or $N^{*}N$) are called singular values of $N$. The trace norm of a complex matrix $N\in M_n(\mathbb{C})$ is defined as 
\begin{equation*}
	\|{N} \|_* =\sum\limits_{i=1}^{n}\sigma_i(N),
\end{equation*}
where $\sigma_1(N)\ge\sigma_2(N)\ge \sigma_3(N)\ge \dots\ge \sigma_n(N)$ are the singular values of $N$ i.e., the positive square roots of eigenvalues of $NN^{*}$.\\
We denote the Laplacian and signless Laplacian singular value respectively by $\sigma^L$ and $\sigma^Q$. If a singular value $\sigma$ is repeated say $k$ times, we indicate this by $\sigma^{(k)}$.

If $N=A(G)$, the adjacency matrix of graph $G$ and $\lambda_i$, where $i=1,2,\dots,n$ are eigenvalues of  the adjacency matrix of a graph $G$, then $\sigma_i(N)=|\lambda_i(G)|$ and so trace norm coincides with the energy of a graph defined by Gutman (1978). Energy of graphs is well studied problem and for more on energy of graphs see \cite{lsg}. Trace norm of a matrix is also known as Nikiforov's energy of a matrix \cite{ar,n1}. So, graph energy extends to digraphs via trace norm as well. By $\mathcal{T}(n)$ and $\mathcal{U}(n)$, we respectively denote the set of oriented trees on $n$ vertices and set of unicyclic digraphs on $n$ vertices. Agudelo and Rada \cite{ar} obtained lower bounds for trace norm of digraphs and characterized the extremal digraphs. Monsalve et al. \cite{mr2}  obtained oriented graphs with maximum and minimum trace norm in the class of oriented bicyclic graphs. Bhat and Manan \cite{bm} studied trace norm for $A_{\alpha}$ matrices of digraphs.\\
Let $\overrightarrow{P_n}$ denote the directed path with $n$ vertices $v_1,v_2,\dots,v_n$ and arc set $\mathcal{A}=\{v_1v_2,v_2v_3,\dots,\\v_{n-1}v_n\}$, let $\overrightarrow{C_n}$ denote a directed cycle with $n$ vertices  $v_1,v_2,\dots,v_n$ and arc set $\mathcal{A}=\{v_1v_2,v_2v_3,\dots\\, v_{n-1}v_n, v_nv_1\}$. For $y\ge 0$ and $0\le x\le n-1$, let $\overrightarrow{S}_n(x,y)$ denote the oriented star graph on $n=x+y+1$ vertices and  with $x$ arcs going out of center vertex and $y$ arcs coming towards center vertex of star. In section 2, we obtain singular values and an exact formula for trace norm of Laplacian matrix of $\overrightarrow{P_n}$, $\overrightarrow{C_n}$ and $\overrightarrow{S}_n(x,y)$. It shown that for a bipartite digraph, Laplcain and signless Laplacian singular values coincide. As an application of this result, we obtain the trace norm of signless Laplacian matrices of $\overrightarrow{P_n}$, $\overrightarrow{C_n}$ and $\overrightarrow{S}_n(x,y)$.
Recall that a  graph $G$ of order $n$ is said to be determined by its spectrum if there exists no non-isomorphic graph of order $n$ with same spectrum as that of $G$. This problem is known as spectral determination problem and is a well studied problem. For example see \cite{abdn,dl,dh,dh1,ts} for few references. In section 3, we show that $\overrightarrow{P_n}$, $\overrightarrow{C_n}$ and $\overrightarrow{S}_n(n-1,0)$ are determined by Laplacian and signless Laplacian singular values but not by adjacency singular values.
We need the following preliminary results.

\begin{lemma} \label{1.1}\cite{hj} (Schur complement) Let \( M \in {M}_{p+q}(\mathbb{R}) \) be a block matrix
	\[
	M = 
	\begin{bmatrix}
		A_{p \times p} & B_{p \times q} \\
		C_{q \times p} & D_{q \times q}
	\end{bmatrix}.
	\]
	
	If \( D \) is invertible, then the Schur complement of \( D \) in \( M \) is the \( p \times p \) matrix defined by
	\[
	M / D = A - BD^{-1}C.
	\]
	
	Also, the determinant satisfies
	\[
	\det(M) = \det(D) \cdot \det(M / D).
	\] 
\end{lemma}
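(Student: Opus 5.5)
The plan is to prove the determinant identity by factoring $M$ into block triangular pieces. Since $D$ is invertible, we can use $D^{-1}$ to eliminate the off-diagonal block $C$ with a block column operation. Concretely, I will check the factorization
\[
M=\begin{bmatrix} A & B\\ C & D\end{bmatrix}
=\begin{bmatrix} I_p & BD^{-1}\\ 0 & I_q\end{bmatrix}
\begin{bmatrix} A-BD^{-1}C & 0\\ 0 & D\end{bmatrix}
\begin{bmatrix} I_p & 0\\ D^{-1}C & I_q\end{bmatrix}.
\]
This reduces to block multiplication. The product of the last two factors is $\begin{bmatrix} A-BD^{-1}C & 0\\ C & D\end{bmatrix}$. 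Multiplying on the left by the first factor adds $BD^{-1}$ times the second block row to the first. The top-left block becomes $A-BD^{-1}C+BD^{-1}C=A$, and the top-right block becomes $BD^{-1}D=B$. The bottom row is unchanged, so we recover $M$.

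Next I take determinants and use multiplicativity. The outer two factors are block unitriangular. A block triangular matrix has determinant equal to the product of the determinants of its diagonal blocks, which follows from the Laplace expansion or the Leibniz formula. Hence each outer factor has determinant $1$. The middle factor is block diagonal, so its determinant is $\det(A-BD^{-1}C)\det(D)$. Therefore
\[
\det(M)=\det(D)\cdot\det(M/D).
\]

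The main obstacle is the fact that block triangular determinants factor, which I would treat as standard. If a self-contained argument is wanted, expand along the first columns of the unipotent block $I$, or note that the outer factors are products of elementary row operations that add multiples of rows. Such operations preserve the determinant.
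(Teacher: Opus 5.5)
Your proof is correct: the factorization multiplies out to $M$, the two outer factors are block unitriangular with determinant $1$, and the block-diagonal middle factor contributes $\det(A-BD^{-1}C)\det(D)$. The paper gives no proof of its own and just cites Horn and Johnson, where this block triangular factorization is the standard argument; note that the mirrored factorization, eliminating with $A^{-1}$ instead of $D^{-1}$, gives $\det(M)=\det(A)\det(D-CA^{-1}B)$, which is the form the paper actually uses in the proof of Theorem 2.4 (with entries rational in $\lambda$), and your purely algebraic argument covers that case verbatim.
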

The next result is known as the interlacing property of singular values.
\begin{lemma} \label{1.2} \cite{hj} (Interlacing property) Let $A\in M_n(\mathbb{C})$ and $B$ be a principal submatrix of $A$ of order $r$, then $$\sigma_k(A)\ge \sigma_k(B)\ge \sigma_{k+r}(A),$$
	where $k=1,2,\dots,n$ and we treat $\sigma_j=0$ if $j>n$.
\end{lemma}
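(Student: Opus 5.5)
The plan is to reduce to single deletions. A principal submatrix $B$ of order $r$ is obtained from $A$ by deleting $n-r$ rows and the $n-r$ columns with the same indices, so it suffices to understand how the singular values change when one row or one column is deleted, and then iterate.

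\emph{Deleting a column.} Let $A'$ be $A$ with column $j$ removed. Then $A'^*A'$ is the principal submatrix of the Hermitian matrix $A^*A$ obtained by deleting row and column $j$. Cauchy interlacing for Hermitian eigenvalues gives
$$\lambda_k(A^*A)\ge \lambda_k(A'^*A')\ge \lambda_{k+1}(A^*A).$$
Taking square roots yields $\sigma_k(A)\ge\sigma_k(A')\ge\sigma_{k+1}(A)$.

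\emph{Deleting a row.} Let $A''$ be $A$ with row $i$ (call it $a^*$) removed. Then $A''^*A''=A^*A-aa^*$ is a rank-one negative semidefinite perturbation of $A^*A$. Weyl's inequalities for such perturbations give the same two-sided interlacing, $\sigma_k(A)\ge\sigma_k(A'')\ge\sigma_{k+1}(A)$. Alternatively, this follows from the column case applied to $A^*$, since $A$ and $A^*$ have the same singular values.

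\emph{Iteration.} Performing the $2(n-r)$ deletions one at a time, the upper inequality accumulates to $\sigma_k(A)\ge\sigma_k(B)$, and the lower inequality shifts the index by one at each step. This gives, for $1\le k\le r$,
$$\sigma_k(A)\ge\sigma_k(B)\ge\sigma_{k+2(n-r)}(A),$$
with $\sigma_j=0$ for $j>n$. The convention $\sigma_j=0$ for $j>n$ handles the indices that fall off the end.

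\emph{Main obstacle: the index shift in the statement.} The step I expect to be the real difficulty is reconciling this bound with the lower index $k+r$ as written in Lemma \ref{1.2}, and as written that lower bound is false in general. For example, take $A=\begin{bmatrix}0&1\\1&0\end{bmatrix}$ and $r=1$. Then $B=[0]$, so $\sigma_1(B)=0$, while $\sigma_2(A)=1$.

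So I would prove the version above, with shift $2(n-r)$, which is the form given in Horn--Johnson. I would then check that every later use of Lemma \ref{1.2} in the paper relies only on the upper bound $\sigma_k(A)\ge\sigma_k(B)$, or else supply the correct shift at that point.
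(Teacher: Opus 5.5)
Your proposal is correct. The paper gives no proof of Lemma~\ref{1.2}; it only cites Horn--Johnson. There the result is proved essentially as you do: a deleted column gives a principal compression of $A^*A$, a deleted row is handled via $A^*$, and one iterates. In that source $r$ is the total number of deleted rows and columns, so for a principal submatrix of order $r$ the shift is $2(n-r)$.

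Your counterexample $A=\begin{bmatrix}0&1\\1&0\end{bmatrix}$, $B=[0]$ is valid, so the lemma as printed, with $r$ the order of $B$, is false. The same example also rules out the Cauchy-type shift $n-r$, so the factor $2$ in the shift $2(n-r)$ is genuinely needed.

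Your planned check of later uses works out. The lemma is invoked only in the proof of Theorem~\ref{3.1}, and always in the form $\sigma_1(L(D))\ge\sigma_1(M)$ for a principal submatrix $M$. That is just the upper bound with $k=1$, so the error in the statement is harmless for how the lemma is applied.

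One detail to state explicitly in your write-up: after the first deletion the intermediate matrices are rectangular. The convention should therefore read $\sigma_j(X)=0$ for $j>\min(p,q)$ when $X$ is $p\times q$. With that convention your $X^*X$ argument goes through unchanged, since the square roots of the eigenvalues of $X^*X$ are exactly the singular values padded with zeros.
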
 
\section{\bf Singular values and trace norm of  Laplacian and signless Laplacian matrices}\label{sec2}
We begin this section by computing the singular values and trace norm of the Laplacian matrix of the directed path $\overrightarrow{P_n}$ and directed cycle $\overrightarrow{C_n}$.
\begin{theorem} \label{2.1} Let $\overrightarrow{C_{n}}$ denote a directed cycle of order $n$ and let $\overrightarrow{P_{n}}$ denote a directed path of order $n$. Then\\
	$(i)$. The singular values of $L(\overrightarrow{C_{n}})$ are $2\sin(\frac{\pi j}{n})$, where $j=0,1,2,\dots,n-1$\\ and $\|L(\overrightarrow{C_n})\|_*=2 \cot\left( \frac{\pi}{2n} \right)$, {for all } $n \ge 2$.\\
	$(ii)$. The singular values of $L(\overrightarrow{P_{n}})$ are $2\sin(\frac{\pi j}{2n})$, where $j=0,1,2,\dots,n-1$\\ and $\|L(\overrightarrow{P_n})\|_*=\cot\left( \frac{\pi}{4n}\right)-1$, {for all } $n \ge 2$.\\
	$(iii)$. 
	$\|L(\overrightarrow{C_n})\|_* = \|L(\overrightarrow{P_n})\|_* + 1 - \tan\left( \frac{\pi}{4n} \right)$.
\end{theorem}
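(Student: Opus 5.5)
For part (i) I will use that $L(\overrightarrow{C_n})=I-P$, where $P$ is the cyclic permutation matrix with $P_{i,i+1}=1$ (indices mod $n$). Since every vertex has outdegree $1$, we have $\Delta^+=I$. The matrix $I-P$ is circulant, hence normal, so its singular values are the moduli of its eigenvalues. The eigenvalues of $P$ are $\omega^j=e^{2\pi i j/n}$ for $j=0,\dots,n-1$, so the singular values are
\[
|1-e^{2\pi i j/n}|=2\left|\sin\left(\tfrac{\pi j}{n}\right)\right|=2\sin\left(\tfrac{\pi j}{n}\right), \qquad 0\le j\le n-1 .
\]
The trace norm is then
\[
\sum_{j=0}^{n-1}2\sin\left(\tfrac{\pi j}{n}\right)=2\,\mathrm{Im}\sum_{j=0}^{n-1} e^{i\pi j/n}.
\]
The geometric sum equals $\frac{1-e^{i\pi}}{1-e^{i\pi/n}}=\frac{2}{1-e^{i\pi/n}}$. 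Writing $1-e^{i\theta}=-2i\sin(\theta/2)e^{i\theta/2}$, its imaginary part is $\cot(\pi/2n)$, which gives $\|L(\overrightarrow{C_n})\|_*=2\cot(\pi/2n)$.

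For part (ii), the Laplacian is $L=L(\overrightarrow{P_n})=I'-N$. Here $N$ is the upper shift and $I'=\mathrm{diag}(1,\dots,1,0)$, because $v_n$ has outdegree $0$. The last row of $L$ is zero. I will compute $L^TL$ rather than $LL^T$. Column $j$ of $L$ is $e_j-e_{j-1}$ for $2\le j\le n-1$, column $1$ is $e_1$, and column $n$ is $-e_{n-1}$. Hence $L^TL$ is tridiagonal with off-diagonal entries $-1$ and diagonal $(1,2,2,\dots,2,1)$, which is exactly the Laplacian of the undirected path $P_n$. Its eigenvalues are classically $2-2\cos(\pi j/n)=4\sin^2(\pi j/2n)$ for $j=0,\dots,n-1$. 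I will verify this directly with the eigenvectors $x_k=\cos\bigl(\pi j(k-\tfrac12)/n\bigr)$. Taking square roots gives the singular values $2\sin(\pi j/2n)$.

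The trace norm then follows from the same geometric-sum method. We have
\[
\sum_{j=0}^{n-1}2\sin\left(\tfrac{\pi j}{2n}\right)=2\,\mathrm{Im}\,\frac{1-e^{i\pi/2}}{1-e^{i\pi/(2n)}}=2\,\mathrm{Im}\,\frac{1-i}{1-e^{i\pi/(2n)}}.
\]
Using $\frac{1}{1-e^{i\theta}}=\tfrac12+\tfrac{i}{2}\cot(\theta/2)$, this becomes $2\,\mathrm{Im}\bigl[(1-i)(\tfrac12+\tfrac i2\cot(\pi/4n))\bigr]=\cot(\pi/4n)-1$.

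Part (iii) is a trigonometric identity. With $t=\tan(\pi/4n)$ we have $2\cot(2\theta)=\cot\theta-\tan\theta$ at $\theta=\pi/4n$. Therefore $2\cot(\pi/2n)=\cot(\pi/4n)-1+1-\tan(\pi/4n)$, as required.

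The only step that needs genuine care is identifying $L^TL$ as the path Laplacian and citing or verifying its spectrum, including the boundary rows and the degenerate case $n=2$. I will check that by the explicit cosine eigenvectors, using the reflection trick at the endpoints. The rest is routine.
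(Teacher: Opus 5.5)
Your proposal is correct and follows essentially the paper's route: reduce each Gram matrix to a known undirected path/cycle spectrum, sum the resulting sines in closed form, and use $2\cot 2\theta=\cot\theta-\tan\theta$ for (iii). The differences are cosmetic.
\begin{itemize}
\item In (i) you use normality of the circulant $I-P$, whereas the paper computes $LL^T=2I_n-A(\overleftrightarrow{C_n})$.
\item In (ii) you identify $L^TL$ with the Laplacian of the undirected path $P_n$, whereas the paper identifies $LL^T$ with $\bigl(2I_{n-1}-A(\overleftrightarrow{P}_{n-1})\bigr)\oplus 0$.
\item You evaluate the sine sums as imaginary parts of geometric series, whereas the paper uses the standard sine-sum formula.
\end{itemize}
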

{\bf Proof}. Let $A({\overrightarrow{C_{n}}})$ be the adjacency matrix of $\overrightarrow{C_{n}}$ and $
\Delta^{+}(\overrightarrow{C_n})$ be its out-degree matrix. Note that
$\Delta^{+}(\overrightarrow{C_n}) = I_n$.\\ We have
\begin{align*}
	L(\overrightarrow{C_n}) &= \Delta^{+}(\overrightarrow{C_n}) - A(\overrightarrow{C_n}) \\
	\text {so that}~~LL^{T} &= (\Delta^{+} - A)(\Delta^{+} - A)^{T} \\
	&= (I_n - A)(I_n - A)^{T} \\
	&= I_n - (A + A^{T}) + AA^{T} \\
	&= I_n - A(\overleftrightarrow{C_n}) + I_n \\
	&= 2I_n - A(\overleftrightarrow{C_n})
\end{align*}

The eigenvalues of $2I_n$ are $2^{[n]}$ and eigenvalues of $A({\overleftrightarrow{C_{n}}})$ are $2\cos{\frac{2\pi j}{n}}$ where $j=0,1,2,\cdots ,n-1$.\\
Thus the eigenvalues of $LL^{T}$ are $2-2\cos{\frac{2\pi j}{n}}=4\sin^{2}\frac{\pi j}{n}$, where $j=0, 1, 2, \cdots, n-1$.\\
The singular values of $L({\overrightarrow{C_{n}}})$ are $2\sin{\frac{\pi j}{n}}$ where $j=0,1,2,\cdots ,n-1.$\\
We next compute an exact formula for the trace norm of $L(\overrightarrow{C}_{n})$.
\begin{align*}
	\|L(\overrightarrow{C_n})\|_* 
	&= 2 \sum_{k=0}^{n-1} \sin\left(\frac{\pi k}{n}\right) \\
	&= 2 \cdot \frac{\sin\left(\frac{n\pi}{2n}\right)}{\sin\left(\frac{\pi}{2n}\right)} \cdot \sin\left( \frac{0 + (n-1)\pi/n}{2} \right) \\
	&= 2 \cdot \frac{\sin\left(\frac{\pi}{2}\right)}{\sin\left(\frac{\pi}{2n}\right)} \cdot \sin\left( \frac{(n-1)\pi}{2n} \right) \\
	&= 2 \cdot \csc\left( \frac{\pi}{2n} \right) \cdot \cos\left( \frac{\pi}{2n} \right) \\
	&= 2 \cot\left( \frac{\pi}{2n} \right), \quad \text{for all } n \ge 2.
\end{align*}

$(ii)$. We next compute the singular values and trace norm of  $L(\overrightarrow{P_n})$\\
The outdegree matrix of the directed path \( \overrightarrow{P_n} \) is given by
\[
\Delta^{+}(\overrightarrow{P_n}) = 
\begin{bmatrix}
	I_{n-1} & {\bf 0}_{(n-1) \times 1} \\
	{\bf 0}_{1 \times (n-1)} & 0_{1 \times 1}
\end{bmatrix}.
\]

Let \( A(\overrightarrow{P_n}) \) denote the adjacency matrix of \( \overrightarrow{P_n} \). We note that
\[
A A^{T} = 
\begin{bmatrix}
	I_{n-1} & {\bf 0}_{(n-1) \times 1} \\
	{\bf 0}_{1 \times (n-1)} & 0_{1 \times 1}
\end{bmatrix}
= \Delta^{+}(\overrightarrow{P_n})=(\Delta^{+}(\overrightarrow{P_n}))^2.
\]

It is easy to see  that
\[
\Delta^{+} A^T + A \Delta^{+} =
\begin{bmatrix}
	A(\overleftrightarrow{P}_{n-1}) & {\bf 0}_{(n-1) \times 1} \\
	{\bf 0}_{1 \times (n-1)} & 0_{1 \times 1}
\end{bmatrix}
\]
We have
\begin{align*}
	LL^T &= (\Delta^{+})^2 - \Delta^{+}A^T- A \Delta^{+} + AA^T \\
	&= 2\Delta^{+} - (\Delta^{+}A^T + A\Delta^{+}) \\
	&= 
	\begin{bmatrix}
		2I_{n-1} - A(\overleftrightarrow{P}_{n-1}) & {\bf 0}_{(n-1)\times 1} \\
		{\bf 0}_{1\times(n-1)} & 0_{1\times 1}
	\end{bmatrix}
\end{align*}

Note that the eigenvalues of \( A(\overleftrightarrow{P_n}) \) are
\[
\lambda_j = 2\cos\left( \frac{\pi j}{n+1} \right), \quad j = 1, 2, \dots, n
\]

so that the eigenvalues of \( A(\overleftrightarrow{P}_{n-1}) \) are
\[
\lambda_j = 2\cos\left( \frac{\pi j}{n} \right), \quad j = 1, 2, \dots, n - 1.
\]
Thus, the eigenvalues of \( LL^T \) are

\[
2 - 2\cos\left(\frac{\pi j}{n}\right) = 4\sin^2\left(\frac{\pi j}{2n}\right), \quad j = 0, 1, \ldots, n-1.
\]

Therefore, the singular values of \( L(\overrightarrow{P_n}) \) are

\[
\sigma_j = 2\sin\left(\frac{\pi j}{2n}\right), \quad j = 0, 1, \ldots, n-1
\]

\bigskip

Now, the trace norm of $L(\overrightarrow{P_n})$ is given by
\begin{align*}
	\|L(\overrightarrow{P_n})\|_* 
	&= 2 \sum_{j=0}^{n-1} \sin\left(\frac{\pi j}{2n}\right) \\
	&= 2 \cdot \frac{\sin\left(\frac{n\pi}{4n}\right)}{\sin\left(\frac{\pi}{4n}\right)} \cdot \sin\left(\frac{(n-1)\pi}{4n}\right) \\
	&= 2 \cdot \frac{\sin\left(\frac{\pi}{4}\right)}{\sin\left(\frac{\pi}{4n}\right)} \cdot \sin\left( \frac{\pi}{4} - \frac{\pi}{4n} \right) \\
	&= \frac{2 \cdot \frac{1}{\sqrt{2}}}{\sin\left(\frac{\pi}{4n}\right)} \cdot \left( \cos\left(\frac{\pi}{4n}\right) - \sin\left(\frac{\pi}{4n}\right) \right) \\
	&= \frac{\sqrt{2}}{\sin\left(\frac{\pi}{4n}\right)} \cdot \frac{1}{\sqrt{2}} \left( \cos\left(\frac{\pi}{4n}\right) - \sin\left(\frac{\pi}{4n}\right) \right) \\
	&= \cot\left( \frac{\pi}{4n} \right) - 1.
\end{align*}

(iii).
We next derive a relation between the trace norms of \(L( \overrightarrow{C_n}) \) and \(L(\overrightarrow{P_n})\).\\
We know
\[
\|L(\overrightarrow{C_n})\|_* = 2 \cot\left( \frac{\pi}{2n} \right)
\quad \text{and} \quad
\|L(\overrightarrow{P_n})\|_* = \cot\left( \frac{\pi}{4n} \right) - 1.
\]
Therefore, 
\begin{align*}
	\|L(\overrightarrow{C_n})\|_* - \|L(\overrightarrow{P_n})\|_*
	&= 2 \cot\left( \frac{\pi}{2n} \right) - \left( \cot\left( \frac{\pi}{4n} \right) - 1 \right) \\
	&= 2 \cot\left( \frac{\pi}{2n} \right) - \cot\left( \frac{\pi}{4n} \right) + 1.
\end{align*}

Using the identity
\[
\cot(x) = \frac{\cot(x/2) - \tan(x/2)}{2},~~
\text{we have}~~ 
2 \cot\left( \frac{\pi}{2n} \right) = \cot\left( \frac{\pi}{4n} \right) - \tan\left( \frac{\pi}{4n} \right)
\]

so that,
\[
\|L(\overrightarrow{C_n})\|_* - \|L(\overrightarrow{P_n})\|_* = 1- \tan\left( \frac{\pi}{4n} \right).
\]

Hence,
\[
\|L(\overrightarrow{C_n})\|_* = \|L(\overrightarrow{P_n})\|_* + 1 - \tan\left( \frac{\pi}{4n} \right).
\]
Note that for $n\ge 2$, the sequence $<\|L(\overrightarrow{C_n})\|_* - \|L(\overrightarrow{P_n})\|_*>$ is positive and strictly increasing and converges to 1. We conclude that $\|L(\overrightarrow{C_n})\|_* > \|L(\overrightarrow{P_n})\|_* $.\qed
The singular values of $L(\overrightarrow{P_n})$ and $L(\overrightarrow{C_n})$ give useful identities involving sine function.

Let \( D \) be a digraph with Laplacian matrix $L(D) = \Delta^{+}(D) - A(D)$ and let the singular values of \( L(D) \) be \( \sigma_1^{L}, \sigma_2^{L}, \ldots, \sigma_n^{L} \geq 0 \). Then $\text{Trace}(LL^T) = \sum_{i=1}^{n} (d_i^{+})^2 + \sum_{i=1}^{n} d_i^{+} = Zg^{+}(D) + a(D)$ or $\sum_{i=1}^{n} (\sigma_i^{L})^2 = Zg^{+}(D) + a(D)$.
From the  Laplacian singular values of the cycle $ \overrightarrow{C_n}$, we see
\[
\sum_{j=0}^{n-1} 4 \sin^2\left( \frac{j\pi}{n} \right) = 4 \sum_{j=0}^{n-1} \sin^2\left( \frac{j\pi}{n} \right) = 4 \cdot \frac{n}{2} = 2n.
\]
or~~
\[
\sum_{j=0}^{n-1} \sin^2\left( \frac{j\pi}{n} \right) = \frac{n}{2}.
\]
From the Laplacian singular values of the Path $ \overrightarrow{P_n}$, we see that
\[
\sum_{j=0}^{n-1} 4 \sin^2\left( \frac{j\pi}{2n} \right) = 4 \cdot \sum_{j=0}^{n-1} \sin^2\left( \frac{j\pi}{2n} \right).
\]

This gives~~
\[
\sum_{j=0}^{n-1} 4 \sin^2\left( \frac{j\pi}{2n} \right) = 2(n-1)
\quad \text{or} \quad \sum_{j=0}^{n-1} \sin^2\left( \frac{j\pi}{2n} \right) = \frac{n-1}{2}.
\]

Like eigenvalues, the singular values of Laplacian and signless Laplacian matrix of a digraph are also same and can be seen in the next result.
\begin{proposition}\label{2.2} Let $B$ be a bipartite digraph of order $n$. Then the matrices $L(B)$ and $Q(B)$ have same singular values and hence same trace norm.
\end{proposition}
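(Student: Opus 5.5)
The plan is to show that $L(B)$ and $Q(B)$ are similar via a diagonal signature matrix, which is orthogonal, so they have the same singular values. Let the bipartition of the underlying graph of $B$ be $\mathcal{V}=V_1\cup V_2$, so every arc of $B$ joins a vertex of $V_1$ to a vertex of $V_2$ (in either direction). Order the vertices so that those of $V_1$ come first. Let $S=\mbox{diag}(I_{|V_1|},-I_{|V_2|})$. Then $S$ is a real diagonal matrix with $S=S^T=S^{-1}$, so $S$ is orthogonal.

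The key computation concerns $S A(B) S$. Its $(i,j)$ entry is $s_i s_j a_{ij}$, and $a_{ij}\neq 0$ only when $v_i,v_j$ lie in different parts. In that case $s_is_j=-1$, so $SA(B)S=-A(B)$. Since $\Delta^+$ is diagonal, it commutes with $S$, and $S\Delta^+S=\Delta^+ S^2=\Delta^+$. Hence
\[
S\,L(B)\,S = S(\Delta^+-A(B))S=\Delta^+ + A(B)=Q(B).
\]

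Next I compute $Q Q^T = S L S\, S L^T S = S (L L^T) S = S(LL^T)S^{-1}$. So $QQ^T$ is similar to $LL^T$ and has the same eigenvalues with multiplicities. Taking nonnegative square roots, $L(B)$ and $Q(B)$ have the same singular values. Summing them gives $\|L(B)\|_*=\|Q(B)\|_*$. Alternatively, one can invoke unitary invariance of singular values directly, since $Q=SLS$ with $S$ orthogonal.

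There is no real obstacle. The only point needing care is that the bipartiteness hypothesis is on the underlying graph, including any symmetric pairs of arcs, so that every nonzero off-diagonal entry of $A(B)$ indeed connects the two parts. One should also note that the relabeling of vertices is a permutation similarity and does not affect singular values.
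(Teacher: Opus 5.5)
Your proof is correct and follows the paper's argument: conjugating by the signature matrix $S=\mathrm{diag}(I_a,-I_b)$ carries $L(B)$ to $Q(B)$. You also state explicitly that $S$ is orthogonal, which is what the conclusion actually rests on, since a general similarity does not preserve singular values; the paper leaves this point implicit.
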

{\bf Proof.} Let $B_{a,b}$ be a bipartite digraph with partite sets of size $a$ and $b$ such that $a+b=n$. Let $\Delta^+(B)$ be the outdegree matrix of $B$. Then $L(B)=\Delta^+(B)-A(B)$ and $Q(B)=\Delta^+(B)+A(B)$ are respectively Laplacian and signless Laplacian matrices associated to $B$. Define a nonsingular matrix $S$ of order $n$ as 
\[
S = 
\begin{bmatrix}
	I_{a} & {\bf 0}_{a \times b} \\
	{\bf 0}_{b \times a} & -I_{b}
\end{bmatrix}.
\]
Clearly $S=S^T=S^{-1}$. With suitable labelling, the adjacency matrix $A(B)$ of bipartite digraph $B$ can be written as 

\[
\begin{bmatrix}
	{\bf 0}_{a\times a} & {C}_{a \times b} \\
	{H}_{b \times a} & {\bf 0}_{b\times b}
\end{bmatrix}
\]

Partition matrix $\Delta^+(B)$ as 
\[
\Delta^+(B)= 
\begin{bmatrix}
	\Delta^+_a & {\bf 0}_{a \times b} \\
	{\bf 0}_{b \times a} & \Delta^+_b
\end{bmatrix},
\]

We have

\begin{align*}
	S^{-1}Q(B)S&=\begin{bmatrix}
		I_{a} & {\bf 0}_{a \times b} \\
		{\bf 0}_{b \times a} & -I_{b}
	\end{bmatrix}\begin{bmatrix}
		\Delta^+_{a} & {C}_{a \times b} \\
		{H}_{b \times a} & {\Delta^+}_{b}
	\end{bmatrix}\begin{bmatrix}
		I_{a} & {\bf 0}_{a \times b} \\
		{\bf 0}_{b \times a} & -I_{b}
	\end{bmatrix}\\
	&=\begin{bmatrix}
		\Delta^+_{a} & {-C}_{a \times b} \\
		{-H}_{b \times a} & {\Delta^+}_{b}
	\end{bmatrix}=\Delta^+(B)-A(B)=L(B).
\end{align*}

Consequently, $L(B)$ and $Q(B)$ have same singular values and hence same trace norm.\qed

We next obtain the singular values and trace norm of signless Laplacian matrix of directed cycle $\overrightarrow{C_{n}}$ and directed path $\overrightarrow{P_{n}}$.
\begin{theorem} \label{2.3} Let $\overrightarrow{C_{n}}$ denote a directed cycle of order $n$ and let $\overrightarrow{P_{n}}$ denote a directed path of order $n$. Then\\
	$(i)$. The singular values of $Q(\overrightarrow{C_{n}})$ are $2|\cos(\frac{\pi j}{n})|$, where $j=0,1,2,\dots,n-1$\\ and $\|Q(\overrightarrow{C_n})\|_*=2 \cot\left( \frac{\pi}{2n} \right)$, {for all } $n \ge 2$ and $n$ is even and $\|Q(\overrightarrow{C_n})\|_*=2 \csc\left( \frac{\pi}{2n} \right)$, {for all } $n \ge 3$ and $n$ is odd.\\
	$(ii)$. The singular values of $Q(\overrightarrow{P_{n}})$ are $2\sin(\frac{\pi j}{2n})$, where $j=0,1,2,\dots,n-1$\\ and $\|Q(\overrightarrow{P_n})\|_*=\cot\left( \frac{\pi}{4n}\right)-1$, {for all } $n \ge 2$.\\
	$(iii)$. 
	$\|Q(\overrightarrow{C_n})\|_* = \|Q(\overrightarrow{P_n})\|_* + 1 + \tan\left( \frac{\pi}{4n} \right)$.\\
	$(iv)$. 
	$\|Q(\vec{C}_n)\|_* \ge \|L(C_n)\|_*$
	with equality if and only if $n$ is even.
\end{theorem}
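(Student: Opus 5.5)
The plan is to treat the cycle directly through $QQ^T$ and to get the path for free from bipartiteness. Since $\Delta^+(\overrightarrow{C_n})=I_n$ and $AA^T=I_n$, we get
\[
Q(\overrightarrow{C_n})Q(\overrightarrow{C_n})^T=(I_n+A)(I_n+A)^T=2I_n+A(\overleftrightarrow{C_n}).
\]
Its eigenvalues are $2+2\cos\frac{2\pi j}{n}=4\cos^2\frac{\pi j}{n}$ for $j=0,\dots,n-1$. Hence the singular values of $Q(\overrightarrow{C_n})$ are $2|\cos\frac{\pi j}{n}|$.

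\textbf{Trace norm of the cycle.} We split by the parity of $n$.

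\emph{$n$ even.} Here $\overrightarrow{C_n}$ is bipartite, so Proposition \ref{2.2} together with Theorem \ref{2.1}(i) gives $\|Q(\overrightarrow{C_n})\|_*=2\cot\frac{\pi}{2n}$ immediately.

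\emph{$n$ odd.} This is the one real computation. I would rewrite
\[
\left|\cos\tfrac{\pi j}{n}\right|=\left|\sin\tfrac{(n-2j)\pi}{2n}\right| .
\]
As $j$ runs over $0,\dots,n-1$, the integer $n-2j$ runs over $n,n-2,\dots,1,-1,\dots,-(n-2)$. So the value $m=n$ occurs once, and every odd $m<n$ occurs twice. With $x=\frac{\pi}{2n}$ this gives
\[
\sum_j \left|\cos\tfrac{\pi j}{n}\right|=2\sum_{m\ \mathrm{odd},\,1\le m\le n}\sin(mx)-1 .
\]
Apply the standard identity $\sum_{k=0}^{N-1}\sin((2k+1)x)=\frac{\sin^2(Nx)}{\sin x}$ with $N=\frac{n+1}{2}$. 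Then use
\[
\sin^2\tfrac{(n+1)\pi}{4n}=\tfrac{1}{2}\Big(1-\cos\big(\tfrac{\pi}{2}+x\big)\Big)=\tfrac{1+\sin x}{2}.
\]
This yields $\sum_j|\cos\frac{\pi j}{n}|=\frac{1+\sin x}{\sin x}-1=\csc x$. Therefore $\|Q(\overrightarrow{C_n})\|_*=2\csc\frac{\pi}{2n}$. Keeping the multiplicity bookkeeping straight is the step I expect to be the main obstacle, so I would check it on $n=3$: the singular values are $2,1,1$ and the trace norm is $4=2\csc\frac{\pi}{6}$.

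\textbf{The path.} Part (ii) is immediate, because $\overrightarrow{P_n}$ is bipartite. Proposition \ref{2.2} and Theorem \ref{2.1}(ii) give the same singular values and the trace norm $\cot\frac{\pi}{4n}-1$.

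\textbf{Relation (iii).} For odd $n$, I would use the half-angle identity $2\csc x=\cot\frac{x}{2}+\tan\frac{x}{2}$. This is the companion of the identity $2\cot x=\cot\frac{x}{2}-\tan\frac{x}{2}$ used in Theorem \ref{2.1}(iii). It gives
\[
\|Q(\overrightarrow{C_n})\|_*-\|Q(\overrightarrow{P_n})\|_*=1+\tan\tfrac{\pi}{4n}.
\]
For even $n$ the same computation as in Theorem \ref{2.1}(iii) gives $1-\tan\frac{\pi}{4n}$ instead. So I would state (iii) as holding for odd $n$, and flag the sign change in the even case.

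\textbf{Comparison (iv).} Compare with $\|L(\overrightarrow{C_n})\|_*=2\cot\frac{\pi}{2n}$ from Theorem \ref{2.1}(i). For even $n$ the two trace norms are equal. For odd $n$ we have $\csc x>\cot x$, since $\cos x<1$ for $0<x<\frac{\pi}{2}$, so the inequality is strict. This gives the inequality with equality exactly when $n$ is even.
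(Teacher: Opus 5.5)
Your proposal is correct and follows essentially the paper's route. The differences are minor: the paper takes the singular values of $Q(\overrightarrow{C_n})$ from an earlier lemma of the authors, whereas you derive them from $QQ^T=2I_n+A(\overleftrightarrow{C_n})$; the path and the even cycle are handled by Proposition 2.2 with Theorem 2.1 exactly as in the paper; and for odd $n$ the paper splits the cosines by sign and sums two arithmetic progressions, while you reindex to $\sin(mx)$ with $m$ odd, both giving $2\csc\frac{\pi}{2n}$. Your caveat on (iii) is also right: the paper's proof of (iii) treats only odd $n$, and for even $n$ the difference is $1-\tan\frac{\pi}{4n}$ (at $n=2$ it is $2-\sqrt{2}$, not $1+\tan\frac{\pi}{8}=\sqrt{2}$), so (iii) as stated holds only for odd $n$.
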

{\bf Proof.} We note that from [Lemma 2.1, \cite{bm}], the set of singular values of $Q(\overrightarrow{P_n})$ are 
$A_1=\{2\cos\frac{j\pi}{2n}:j=1,2,3,\ldots,n\}$. As the set of singular values of $L(\overrightarrow{P_n})$ are 
$A_2=\{2\sin\frac{j\pi}{2n}:j=0,2,3,\ldots,n-1\}$. By Proposition $2.2$, we see that $A_1=A_2$ and $\|Q(\overrightarrow{P_n})\|_*=\|L(\overrightarrow{P_n})\|_*=\cot(\frac{\pi}{4n})-1.$

By [Lemma 2.2, \cite{bm}], the set of singular values of $Q(\overrightarrow{C_n})$ is $\{2|\cos(\frac{j\pi}{n})|\}$. In case $n$ is even, then by Proposition $2.2$, $\|Q(\overrightarrow{C_n})\|_*=\|L(\overrightarrow{C_n})\|_*=2\cot(\frac{\pi}{2n}), n\ge 2$.\\
We next consider the case when $n$ is odd.\\

\begin{align*}
	\|Q(\overrightarrow{C_n})\|_* 
	&= 2 \sum_{j=0}^{2k} |\cos(\frac{j\pi}{2k+1})|\\
	&=2\sum_{j=0}^k\cos(\frac{j\pi}{2k+1})-2\sum_{j=k+1}^{2k}\cos(\frac{j\pi}{2k+1}) 
\end{align*}

Using the identity $\sum_{j=0}^{n-1}\cos(a+jd)=\frac{\sin(\frac{nd}{2})}{\sin(\frac{d}{2})}\cdot \cos(a+\frac{(n-1)d}{2})$, and simplifying, we see

\begin{align*}
	2\sum_{j=0}^k\cos(\frac{j\pi}{2k+1})&=\sin(\frac{k\pi}{2k+1})\cot(\frac{\pi}{2(2k+1)})+2{\cos}^2(\frac{k\pi}{2(2k+1)})\\
	&=\cos(\frac{\pi}{2n})\cot(\frac{\pi}{2n})+1+\sin(\frac{\pi}{2n}).
\end{align*}
Also, 
\begin{align*}
	-2\sum_{j=k+1}^{2k}\cos(\frac{j\pi}{2k+1})&=2\sum_{j=1}^{k}\cos(\frac{j\pi}{2k+1})\\
	&=-2+2\sum_{j=0}^{k}\cos(\frac{j\pi}{2k+1})\\
	&=-2+\cos(\frac{\pi}{2n})\cot(\frac{\pi}{2n})+1+\sin(\frac{\pi}{2n}).
\end{align*}
Hence,
\begin{align*}
	\|Q(\overrightarrow{C_n})\|_*&=-2+2+2\cos(\frac{\pi}{2n})\cot(\frac{\pi}{2n})+2\sin(\frac{\pi}{2n})\\
	&=2\csc(\frac{\pi}{2n}).
\end{align*}

We next prove the relation between $\|Q(\overrightarrow{C_n})\|_*$ and $\|Q(\overrightarrow{P_n})\|_*$.\\
For $n$ odd, using the identity $\csc 2x=\frac{1+\cot^2 x}{2\cot x}$, we have
\begin{align*}
	\|Q(\overrightarrow{C_n})\|_*-\|Q(\overrightarrow{P_n})\|_*&=2\csc(\frac{\pi}{2n})+1-\cot(\frac{\pi}{4n})\\
	&=1+\tan (\frac{\pi}{4n}).
\end{align*}

For $n$ even, it is clear that 	$\|Q(\vec{C}_n)\|_* =\|L(C_n)\|_*$. For $n$, odd we see
\begin{align*}
	\|Q(\vec{C}_n)\|_* - \|L(\vec{C}_n)\|_* &= 2 \csc\left(\frac{\pi}{2n}\right) - 2 \cot\left(\frac{\pi}{2n}\right) \\
	&= 2 \left[ \frac{1}{\sin\left(\frac{\pi}{2n}\right)} - \frac{\cos\left(\frac{\pi}{2n}\right)}{\sin\left(\frac{\pi}{2n}\right)} \right] \\
	&= 2 \csc\left(\frac{\pi}{2n}\right) \left[ 1 - \cos\left(\frac{\pi}{2n}\right) \right] \\
	&= 2 \tan\left(\frac{\pi}{4n}\right)>0.
\end{align*}\qed

\begin{theorem}
	If $y\ge 1$ and $0\le x\le n-2$, then
	the singular values of \( L(\overrightarrow{S_n}(x, y))\) (or \( Q(\overrightarrow{S_n}(x, y))\)) are\\ $$0^{[x]}, 1 ^{[y-1]}~~ \text{and}~~ \sqrt{ \frac{(x + x^2 + y + 1) \pm \sqrt{(x + x^2 + y + 1)^2 - 4(x + x^2 + xy)}}{2}}$$
	and 
	\[
	\|L(\overrightarrow{S_n}(x, y)\|_* =\|Q(\overrightarrow{S_n}(x, y)\|_*= (y - 1) + \sqrt{ \frac{(x + x^2 + y + 1) + \sqrt{(x + x^2 + y + 1)^2 - 4(x + x^2 + xy)}}{2}}.
	\]
	If $x=n-1$ and $y=0$, then the singular values of 
	$L( \overrightarrow{S}_n(n - 1, 0) )$ (or $Q( \overrightarrow{S}_n(n - 1, 0) )$) are
	$0^{[n-1]}, \sqrt{(n - 1)^2 + (n - 1)}$
	and $$\|L(\overrightarrow{S}_n(n - 1, 0))\|_*=\|Q(\overrightarrow{S}_n(n - 1, 0))\|_*=\sqrt{(n - 1)^2 + (n - 1)}.$$
\end{theorem}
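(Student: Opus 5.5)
We will compute $LL^T$ directly for $L=L(\overrightarrow{S_n}(x,y))$, reduce it to a small block using the symmetry among the leaves, and then get $Q$ for free from Proposition \ref{2.2}. Label the centre $c$, the $x$ out-leaves $u_1,\dots,u_x$ (arcs $cu_i$) and the $y$ in-leaves $w_1,\dots,w_y$ (arcs $w_jc$). Then $d_c^+=x$, $d_{u_i}^+=0$ and $d_{w_j}^+=1$, so the rows of $L$ are as follows:
\begin{itemize}
\item the row of $c$ is $x e_c-\sum_i e_{u_i}$;
\item each row of $u_i$ is zero;
\item the row of $w_j$ is $e_{w_j}-e_c$.
\end{itemize}

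Since $LL^T$ is the Gram matrix of these rows, the $x$ zero rows immediately give the eigenvalue $0$ with multiplicity $x$. On the remaining coordinates $\{c,w_1,\dots,w_y\}$ we get
\[
M=\begin{bmatrix} x^2+x & -x\,\mathbf{1}^T\\ -x\,\mathbf{1} & I_y+J_y\end{bmatrix}.
\]
Any vector supported on the $w$'s with coordinate sum zero is an eigenvector of $M$ with eigenvalue $1$. This gives $1^{[y-1]}$, which is where $y\ge1$ is used.

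The orthogonal complement within this block is spanned by $e_c$ and $\frac{1}{\sqrt y}\mathbf{1}_w$. There $M$ acts as the $2\times 2$ matrix
\[
\begin{bmatrix} x^2+x & -x\sqrt y\\ -x\sqrt y & 1+y\end{bmatrix},
\]
with trace $T=x+x^2+y+1$ and determinant $(x^2+x)(1+y)-x^2y=x+x^2+xy$. Solving $\lambda^2-T\lambda+\det=0$ and taking square roots gives the two listed singular values. Alternatively, one can get the characteristic polynomial via the Schur complement (Lemma \ref{1.1}) applied to $\lambda I-M$ with the $w$-block inverted; that is a useful cross-check.

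The trace norm is the sum of all these square roots. I expect the main point to watch is the bookkeeping at this step. As written, the displayed formula keeps only the ``$+$'' root besides $(y-1)$. When $x\ge1$ the ``$-$'' root is positive, so it should contribute as well. It is convenient to note that $\sqrt{\lambda_+}+\sqrt{\lambda_-}=\sqrt{T+2\sqrt{x+x^2+xy}}$. I would therefore verify carefully whether the formula intends both roots, or only the case $x=0$, where $\lambda_-=0$.

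For $x=n-1$, $y=0$, only the row of $c$ is nonzero. Hence $LL^T$ has rank one with nonzero eigenvalue $\|x e_c-\sum_i e_{u_i}\|^2=(n-1)^2+(n-1)$, which gives $0^{[n-1]}$ and $\sqrt{(n-1)^2+(n-1)}$, and this value is also the trace norm. Finally, every oriented star is bipartite, with the centre forming one side and the leaves the other. So Proposition \ref{2.2} shows that $Q$ has the same singular values and trace norm as $L$ in both cases.
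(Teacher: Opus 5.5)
Your argument is correct. For the singular values it reaches the same quadratic $\lambda^2-(x+x^2+y+1)\lambda+(x+x^2+xy)$ as the paper, but by a somewhat different route.

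The paper partitions $\lambda I-LL^T$ into two blocks: one indexed by the centre and out-leaves, one indexed by the in-leaves. It inverts $\lambda I_{x+1}-(x^2+x)E$ and evaluates $\det(A)\det(D-CA^{-1}B)$ through the eigenvalues of a matrix of the form $(\lambda-1)I_y-\gamma J_y$. You instead read $LL^T$ as the Gram matrix of the rows of $L$, split off the $x$ zero rows, and write down the eigenvalue-$1$ eigenspace explicitly. You then compress $M$ to a $2\times 2$ matrix on $\mathrm{span}\{e_c,\mathbf 1_w\}$.

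Your version gives eigenvectors and multiplicities directly and needs no generic-$\lambda$ argument to justify the inversion. It is also less error-prone: the paper's displayed $D-CA^{-1}B$ omits the $-J_y$ contributed by $D$, although the eigenvalue it then uses is correct. Your treatment of the case $x=n-1$, $y=0$ and the passage to $Q$ via Proposition 2.2 are essentially the paper's.

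On the trace norm you should not hedge. The displayed formula is wrong whenever $x\ge1$, and the paper's proof makes exactly the omission you suspect: it adds only $\sqrt{\lambda_+}$ to $y-1$. Since $\lambda_+\lambda_-=x(1+x+y)>0$ for $x\ge1$, the correct value is
\[
\|L(\overrightarrow{S_n}(x,y))\|_*=\|Q(\overrightarrow{S_n}(x,y))\|_*=(y-1)+\sqrt{x+x^2+y+1+2\sqrt{x+x^2+xy}},
\]
which reduces to the stated expression exactly when $x=0$.

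For instance, $x=y=1$ gives $\overrightarrow{P_3}$. Its Laplacian singular values $0,1,\sqrt3$ (Theorem 2.1(ii)) sum to $1+\sqrt3=\cot(\pi/12)-1$, whereas the stated formula gives $\sqrt3$. So your proof establishes the singular values as stated together with this corrected trace norm. The trace-norm formula as stated cannot be proved for $x\ge1$.
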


{\bf Proof.} We prove this result for the Laplacian matrix only. For signless Laplacian matrix, the result follows by Proposition $2.2$. We first compute the Laplacian singular values of  $\overrightarrow{S_n}(x, y)$  with  $x + y = n - 1, y\ge 1$ and $0\le x\le n-2$. We first define some matrices

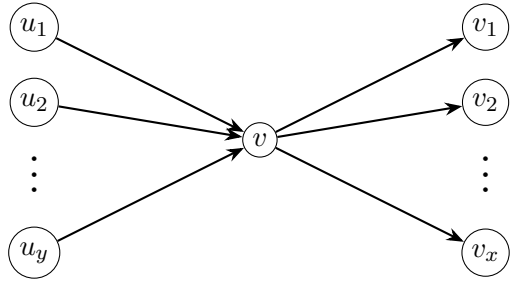
\begin{figure}[h]
	\centering 
	\begin{tikzpicture}[
		vertex/.style={draw, circle, inner sep=2pt, font=\small},
		edge/.style={-Stealth, thick}
		]
		
		\node[vertex] (v) at (0,0) {$v$};
		
		\node[vertex] (u1) at (-3, 1.5)  {$u_1$};
		\node[vertex] (u2) at (-3, 0.5)  {$u_2$};
		\node[scale=1.5] (udot) at (-3, -0.3) {$\vdots$};
		\node[vertex] (uy) at (-3, -1.5) {$u_y$};
		
		\node[vertex] (v1) at (3, 1.5)  {$v_1$};
		\node[vertex] (v2) at (3, 0.5)  {$v_2$};
		\node[scale=1.5] (vdot) at (3, -0.3) {$\vdots$};
		\node[vertex] (vx) at (3, -1.5) {$v_x$};
		
		\draw[edge] (u1) -- (v);
		\draw[edge] (u2) -- (v);
		\draw[edge] (uy) -- (v);
		
		\draw[edge] (v) -- (v1);
		\draw[edge] (v) -- (v2);
		\draw[edge] (v) -- (vx);
		
	\end{tikzpicture}
	\caption{An oriented Star  $\vec{S}_n(x,y)$}
\end{figure}

Let \( J_y \) be the all-ones matrix of order \( y \times y \), $E$ be a square matrix of order $x+1$ with
\[
E_{11} = 1, \quad E_{ij} = 0 \text{ for } (i,j) \ne (1,1)
\]

and $F$ be a square matrix of order $x+1$ with
\[
F_{1j} = 1 \text{ for } j = 2, 3, \ldots, x+1; \quad F_{ij} = 0 \text{ otherwise,}
\]

and $G$ be a matrix of order $y\times (x+1)$ with
\[
G_{i1} = 1 \text{ for } i = 1, 2, \ldots, y; \quad G_{ij} = 0 \text{ for } (i,j) \ne (1,1), (2,1), \ldots, (y,1). 
\]
The out-degree matrix \( \Delta^{+}(\overrightarrow{S_n}(x, y) \) is
\[
\Delta^{+}(\overrightarrow{S_n}(x, y)) = \mathrm{diag}({x}, \underbrace{0,\dots,0}_{x\text{ times}}, \underbrace{1,\dots,1}_{y\text{ times}})
=
\Delta^+(\overrightarrow{S_n}(x, y)) =
\begin{bmatrix}
	xE & \mathbf{0} \\
	\mathbf{0} & I_{y}
\end{bmatrix}.
\]
Also,
\[
A(\overrightarrow{S_n}(x, y)) =
\begin{bmatrix}
	F & \mathbf{0} \\
	G & \mathbf{0}
\end{bmatrix},
\]
so that
\[
A\Delta^+ =
\begin{bmatrix}
	\mathbf{0} & \mathbf{0} \\
	xG & \mathbf{0}
\end{bmatrix}
\quad \text{and} \quad
\Delta^+A^T =
\begin{bmatrix}
	\mathbf{0} & {xG^T} \\
	\mathbf{0} & \mathbf{0}
\end{bmatrix},
\]
and therefore
\[
\Delta^+A^T + A\Delta^+ =
\begin{bmatrix}
	\mathbf{0} & xG^T \\
	xG & \mathbf{0}
\end{bmatrix},
\]
Also,
\[
AA^T =
\begin{bmatrix}
	xE & \mathbf{0} \\
	\mathbf{0} & J_y
\end{bmatrix},
\]
and
\[
(\Delta^+)^2 =
\begin{bmatrix}
	x^2E & \mathbf{0} \\
	\mathbf{0} & J_y
\end{bmatrix}.
\]
We have
\[
LL^T = (\Delta^+)^2 - (\Delta^+A^T + A\Delta^+) + AA^T =
\begin{bmatrix}
	(x^2 + x)E & -xG^T \\
	-xG & (J + I)_y
\end{bmatrix}
\]
so that
\[
\phi_{LL^T}(\lambda) = \det(\lambda I_n - LL^T)
= \det
\begin{bmatrix}
	\lambda I_{x+1} - (x^2 + x)E & xG^T \\
	xG & \lambda I_y - (J + I)
\end{bmatrix}
\]

Let
\( A = \lambda I_{x+1} - (x^2 + x)E \),~~
\( B = xG^T \),~~
\( C = xG \) and ~~
\( D = \lambda I_y - (J + I) \).

By Lemma $1.1$, we see
\[
\phi_{LL^T}(\lambda) = \det(A) \cdot \det(D - CA^{-1}B).
\]
As
\( A^{-1} = \operatorname{diag}\left( \frac{1}{\lambda - (x^2 + x)}, \frac{1}{\lambda}, \dots, \frac{1}{\lambda} \right) \)
\( CA^{-1}B = \frac{x^2}{\lambda - (x^2 + x)} J_y \).

So that
\[
D - CA^{-1}B = (\lambda - 1)I_y - \frac{x^2}{\lambda - (x^2 + x)} J_y.
\]
The eigenvalues of \( D - CA^{-1}B \) are
\[
\lambda - 1 - \frac{(\lambda - x)y}{\lambda - x - x^2}, \quad (\lambda - 1)^{[y-1]}.\]

Hence,
\[
\det(D - CA^{-1}B) = (\lambda - 1)^{y - 1} \left[ (\lambda - 1) - \frac{(\lambda - x)y}{\lambda - x - x^2} \right].
\]

Now,
\[
\phi_{LL^T}(\lambda) = \lambda^x (\lambda - 1)^{y - 1} (\lambda - x - x^2) \left[ (\lambda - 1) - \frac{(\lambda - x)y}{\lambda - x - x^2} \right]
\]
\[
=\left[\lambda^x  (\lambda - 1)^{y-1}][(\lambda - 1)(\lambda - x - x^2) - (\lambda - x)y \right].
\]
The eigenvalues of \( LL^T \) are
\( 0 \) with multiplicity \( x \),
\( 1 \) with multiplicity \( y - 1 \)
and two nontrivial eigenvalues given by
\[
\frac{(x + x^2 + y + 1) \pm \sqrt{(x + x^2 + y + 1)^2 - 4(x + x^2 + xy)}}{2}.
\]
Thus, the singular values of $(L(\overrightarrow{S_n}(x, y))$ are
\[
\underbrace{0, \dots, 0}_{x \text{ times}}, \quad \underbrace{1, \dots, 1}_{y - 1 \text{ times}}, \quad \sqrt{ \frac{(x + x^2 + y + 1) \pm \sqrt{(x + x^2 + y + 1)^2 - 4(x + x^2 + xy)}}{2} }.
\]
and the trace norm of $L(\overrightarrow{S_n}(x, y) )$ is
\[
\text{Trace norm} = (y - 1) + \sqrt{ \frac{(x + x^2 + y + 1) + \sqrt{(x + x^2 + y + 1)^2 - 4(x + x^2 + xy)}}{2} }.
\]
We next consider the case
\( y = 0 \), so \( x = n - 1 \). 
\[
\phi_{LL^T}(\lambda) = \lambda^{n - 1} (\lambda + a_1)
\quad \text{where} \quad a_1 = -(\sigma_1^L)^2.
\]

Also, since the sum of squares of Laplacian singular values equals the trace of \( LL^T \), we have
\[
(\sigma_1^L)^2 = Zg^+(D) + a = (n - 1)^2 + (n - 1)
~\text{gives}~ \sigma_1^L = \sqrt{(n - 1)^2 + (n - 1)}.
\]

So, the singular values of 
$L( \overrightarrow{S}_n(n - 1, 0) )$ are
$0^{[n-1]}, \sqrt{(n - 1)^2 + (n - 1)}$
and $$\|L(\overrightarrow{S}_n(n - 1, 0))\|_*=\sqrt{(n - 1)^2 + (n - 1)}.$$\qed

\section{\bf Digraphs determined by Laplacian and signless Laplacian singular values}\label{sec3}
We  say a digraph $D$ of order $n$ is determined by its singular values if there exists no digraph of order $n$, non-isomorphic to $D$ and has same singular values as that of $D$. The adjacency singular values of $\overrightarrow{C}_n$ are $[1^{(n)}]$ and those of $\overrightarrow{P}_n$ are $[1^{(n-1)},0]$. For example see [Lemma 2.1, \cite{bm}] with $\alpha=0$. We note that $\overrightarrow{C}_{n-1}\bigcup K_1$  and $\overrightarrow{P}_n$ have same adjacency ($A$) singular values. It is clear that both $\overrightarrow{P}_n$ and $\overrightarrow{C}_n$ are not determined by adjacency singular values. The adjacency singular values of ${\overrightarrow S}_n(x,y)$ are $[\sqrt{x}, \sqrt{y}, 0^{(n-2)}]$ \cite{mr2}. It is clear that ${\overrightarrow S}_n(x,y)$ is not determined by its adjacency singular values. For example for $x\neq y$, ${\overrightarrow S}_n(x,y)$ and ${\overrightarrow S}_n(y,x)$ are non-isomorphic but share same adjacency singular values.\\
We next show that both the digraphs $\overrightarrow{P}_n$ and $\overrightarrow{C}_n$ are determined by their Laplacian ($L$) and signless Laplacian ($Q$) singular values. We also show that an oriented star ${\overrightarrow S}_n(x,y)$ is not in general determined by its Laplacian and signless Laplacian singular values. As the oriented stars ${\overrightarrow S}_n(0,n-1)$ and ${\overrightarrow S}_n(1,n-2)$ are clearly non-isomorphic. By Theorem $2.4$, both share same Laplacian (respectively signless Laplacain) singular values $[\sqrt{n}, 1^{(n-2)}, 0]$. However as shown in next result the oriented star ${\overrightarrow S}_n(n-1,0)$ is determined by its Laplacian and signless Laplacian singular values.
\begin{theorem} \label{3.1} 
	$(i)$. The digraphs $\overrightarrow{P}_n$ and $\overrightarrow{C}_n$ are determined by their Laplacian singular values.
	$(ii)$. The digraphs $\overrightarrow{P}_n$ and $\overrightarrow{C}_n$ are determined by their signless Laplacian singular values.\\
	$(iii)$. The digraph $\overrightarrow{S}_n(n-1,0)$ is determined by its Laplacian and signless Laplacian singular values.
\end{theorem}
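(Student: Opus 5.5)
The plan rests on three invariants of the singular values of $M\in\{L(D),Q(D)\}$. First, since $A(D)$ has zero diagonal and $(AA^T)_{ii}=d_i^+$, we have
\[
\sum_i(\sigma_i^M)^2=\mathrm{Trace}(MM^T)=\sum_i\left((d_i^+)^2+d_i^+\right)=Zg^+(D)+a(D)
\]
for both $L$ and $Q$. Second, the number of zero singular values equals $n-\mathrm{rank}(M)$. Third, $\sum_i(\sigma_i^M)^4=\mathrm{Trace}((MM^T)^2)=\sum_{i,j}(MM^T)_{ij}^2$. The first invariant is the workhorse, because $d^2+d$ takes the values $0,2,6,12,\dots$ and so pins down outdegrees very tightly.

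\textbf{Part (iii).} This part is the cleanest, so I would do it first. By Theorem 2.4, a digraph $D$ cospectral with $\overrightarrow{S}_n(n-1,0)$ has $\mathrm{rank}\,M=1$ and $\sum_i\left((d_i^+)^2+d_i^+\right)=(n-1)^2+(n-1)$.

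For $M=L$, every nonzero row $i$ has the positive entry $d_i^+$ in column $i$. Any other row $j$ has entry $0$ or $-1$ in column $i$. So two nonzero rows cannot be positively proportional. Nor can they be negatively proportional, since row $j$ has its positive entry in column $j$, where row $i$ is $\le 0$. Hence exactly one vertex has positive outdegree, say $d$. Then $d^2+d=(n-1)^2+(n-1)$ forces $d=n-1$, so $D\cong\overrightarrow{S}_n(n-1,0)$.

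For $M=Q$, all entries are nonnegative, so proportional rows must be positively proportional. Suppose $c>0$ and rows $i,j$ satisfy row $j=c\cdot$ row $i$. Comparing columns $i$ and $j$ gives $a_{ji}=c\,d_i^+$ and $d_j^+=c\,a_{ij}$. These force $d_i^+=d_j^+=1$ and $c=1$, and the rows must then agree everywhere. But row $i$ is supported on $\{i,j\}$ and row $j$ on $\{i,j\}$, so all nonzero rows lie in such a $2$-cycle pair. Rank $1$ then allows at most two nonzero rows, giving $\sum\left((d_i^+)^2+d_i^+\right)\le 4<(n-1)^2+(n-1)$ for $n\ge 3$. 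This contradiction leaves a single non-sink vertex, and we conclude as for $L$. The case $n=2$ is checked directly.

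\textbf{Part (i).} For $\overrightarrow{C}_n$, Theorem 2.1 gives exactly one zero singular value and $\sum\sigma^2=2n$.

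A sink $i$ gives a zero row of $L$, and $L\mathbf{1}=0$ always. So $k$ sinks force nullity at least $\max(1,k)$, and nullity $1$ allows at most one sink. If there is no sink, each of the $n$ vertices contributes at least $2$ to $\sum(d^2+d)=2n$, so every $d_i^+=1$. If there is exactly one sink, the remaining $n-1$ vertices contribute $2n$, so one vertex has outdegree $2$ (contributing $6$) and the others have outdegree $1$.

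For $\overrightarrow{P}_n$, Theorem 2.1 gives one zero singular value and $\sum\sigma^2=2(n-1)$. No sink would give a sum of at least $2n$, so there is exactly one sink and every other vertex has outdegree $1$.

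In the remaining configurations I would show that no vertex has indegree at least $2$, using the fourth moment. The target values are $\sum\sigma^4=6n$ for $\overrightarrow{C}_n$, read off from $LL^T=2I-A(\overleftrightarrow{C_n})$, and the analogous value for $\overrightarrow{P}_n$. Writing $LL^T=(\Delta^+)^2-(\Delta^+A^T+A\Delta^+)+AA^T$, a vertex of indegree $r\ge2$ creates off-diagonal entries $1$ in $AA^T$ for each of the $\binom r2$ pairs of its in-neighbours. I expect these to strictly increase $\sum_{i,j}(LL^T)_{ij}^2$ beyond the target, and the outdegree-$2$ configuration should similarly be excluded by comparing fourth moments.

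Once every indegree is at most $1$, the digraph is a disjoint union of directed cycles, together with a directed path ending at the sink when there is one. Each component contributes its own zero singular value, since $L$ is block diagonal and each block has zero row sums. Nullity $1$ therefore forces a single component, which is $\overrightarrow{C}_n$, respectively $\overrightarrow{P}_n$.

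\textbf{Part (ii).} For $Q$ I would run the same argument with the same two trace identities. The difference is that $Q\mathbf{1}\neq0$, so the nullity bookkeeping changes. A sink still gives a zero row, which handles $\overrightarrow{P}_n$ and $\overrightarrow{C}_n$ for even $n$. For odd $n$, $Q(\overrightarrow{C}_n)$ has no zero singular value (Theorem 2.3), so there are no sinks and all outdegrees equal $1$. Components are then ruled out by observing that a cycle component of even length in a union would produce a zero singular value, while odd-cycle unions are again distinguished by the fourth moment.

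\textbf{Main obstacle.} The step I expect to be hardest is the fourth-moment comparison: showing that a vertex of indegree at least $2$, or the single outdegree-$2$ vertex, strictly raises $\sum_{i,j}(MM^T)_{ij}^2$. The cross terms from $\Delta^+A^T+A\Delta^+$ also shift when the structure changes. I would first compute $\sum_{i,j}(MM^T)_{ij}^2$ explicitly for functional digraphs (all outdegrees $1$), where $MM^T=2I\mp(A+A^T)+AA^T$ has a transparent form in terms of in-neighbourhoods, and only then handle the one-sink variants.
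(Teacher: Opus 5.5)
The step you flag as the main obstacle is where the proposal breaks, and no argument can repair it: the $\overrightarrow{P}_n$ halves of (i) and (ii) are false for every $n\ge 3$.

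Your first two invariants correctly reduce to digraphs with one sink $s$ and all other outdegrees $1$. For these, let $r_v$ be the indegrees and $t$ the number of $2$-cycles. Then $\mathrm{Tr}((LL^T)^2)=6(n-1)-2r_s+4t+\sum_v r_v(r_v-1)$, while the target is $6n-8$. So equality means $4t+(r_s-1)(r_s-2)+\sum_{v\ne s}r_v(r_v-1)=0$. Your heuristic is right for non-sink vertices, but $r_s=2$ is allowed. An arc $i\to s$ contributes nothing to $\Delta^+A^T+A\Delta^+$ because $d_s^+=0$. The missing symmetric pair of $-1$'s is exactly compensated by the pair of $+1$'s that the two in-neighbours of $s$ create in $AA^T$.

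In fact the whole spectrum agrees. Suppose every vertex except $s$ has outdegree $1$ and the underlying graph is a tree $T$. The nonzero rows of $L(D)$ are $e_i^T-e_{f(i)}^T$, and they form a matrix $N$ whose transpose is an oriented incidence matrix of $T$. Hence $N^TN$ is the ordinary Laplacian of $T$, and $L(D)L(D)^T=NN^T\oplus 0$. So the $L$-singular values of $D$ are the square roots of the Laplacian eigenvalues of $T$, wherever the sink sits.

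Consequently the converging path $u_1\to\cdots\to u_a\to s\leftarrow w_b\leftarrow\cdots\leftarrow w_1$ (with $a,b\ge 1$, $a+b=n-1$) has $L$-singular values $2\sin(\pi j/(2n))$, exactly like $\overrightarrow{P}_n$, yet it has two sources. Being bipartite, it is also $Q$-cospectral with $\overrightarrow{P}_n$ by Proposition 2.2. For $n=3$ this is the pair $\overrightarrow{P}_3=\overrightarrow{S}_3(1,1)$ and $\overrightarrow{S}_3(0,2)$, both with singular values $[\sqrt3,1,0]$. That is the paper's own example just before the theorem.

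The paper's proof has the same hole. $T_5$, $T_6$, $T_7$ all contain a \emph{non-sink} vertex of indegree $\ge 2$, so trees whose only vertex of indegree $\ge 2$ is the sink are never examined. Likewise the ``direct verification'' at $n=4$ misses $v_1\to v_2\to v_3\leftarrow v_4$. Combined with the incidence-matrix observation, your first two invariants show that every $L$-cospectral mate of $\overrightarrow{P}_n$ is an in-arborescence whose underlying tree is Laplacian-cospectral with $P_n$. Since $P_n$ is determined by its Laplacian spectrum, these mates are the orientations of $P_n$ with all arcs directed toward one vertex. That is the most that can be proved.

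For $\overrightarrow{C}_n$ your moment method does go through, and it is cleaner than the paper's numerical interlacing, but three points need fixing.

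First, your one-sink case is empty rather than an outdegree-$2$ configuration. The excesses $d^2+d-2=(d-1)(d+2)\in\{0,4,10,\dots\}$ cannot total $2$; your configuration actually sums to $2n+2$. So only digraphs with all outdegrees $1$ remain. For them $\mathrm{Tr}((LL^T)^2)=6n+4t+\sum_v r_v(r_v-1)$. For $n\ge 3$ this equals $6n$ only for disjoint unions of directed cycles of length $\ge 3$, and nullity $1$ then leaves $\overrightarrow{C}_n$.

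Second, in (ii) the identical formula holds for $\mathrm{Tr}((QQ^T)^2)$. So the fourth moment does \emph{not} separate $\overrightarrow{C}_9$ from $\overrightarrow{C}_3\cup\overrightarrow{C}_3\cup\overrightarrow{C}_3$. Nor does nullity separate $\overrightarrow{C}_{10}$ from $\overrightarrow{C}_4\cup\overrightarrow{C}_3\cup\overrightarrow{C}_3$. Use instead that each cycle component contributes exactly one $Q$-singular value $2$, since $2|\cos(\pi j/m)|=2$ only for $j=0$.

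Third, in (iii) rows of $L$ can be negatively proportional. A $2$-cycle gives rows $e_i-e_j$ and $e_j-e_i$, so $L(\overleftrightarrow{K_2}\cup(n-2)K_1)$ has rank $1$. This case is excluded by the same sum-of-squares comparison you used for $Q$. With that fix, your (iii) is a valid self-contained replacement for the paper's appeal to an external rank characterization.
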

{\bf Proof.} $(i)$. The Laplacian singular values of the directed cycle $\overrightarrow{C}_n$ are given by
\[ \sigma_{j}^{L} = 2 \sin\left(\frac{\pi j}{n}\right), \quad j = 0, 1, 2, \dots, n-1. \]
Therefore,
\begin{align*}
	\sum_{j=1}^{n} (\sigma_{j}^{L})^2 &= \text{Tr}\left(L(\overrightarrow{C}_n) L^T(\overrightarrow{C}_n)\right) \\
	&= \sum_{i=1}^{n} (d_i^{+})^2 + \sum_{i=1}^{n} d_i^{+} \\
	&= {Zg}^+(\overrightarrow{C}_n) + a(\overrightarrow{C}_n) \\
	&= n + n = 2n,
\end{align*}
where $a(\overrightarrow{C}_n)$ represents the number of arcs. 
We first compare $\overrightarrow{C}_n$ with oriented trees on $n$ vertices. The outdegree sequence of $\overrightarrow{C}_n$ is $D_1=[1^n]$. We note that there is no oriented tree $T \in \mathcal{T}(n)$ with outdegree sequence $[1^n]$. For each oriented tree, the number of arcs is $a=n-1$. Consequently, the maximum number of positive outdegrees an oriented tree can possess is $n-1$, yielding an outdegree sequence of $[1^{n-1}, 0]$.  
For any such oriented tree $T$ with outdegree sequence $D_2=[1^{n-1}, 0]$, we have $Zg^{+}(T) = n-1$, $a(T) = n-1$ and 
$\sum_{j=1}^{n} (\sigma_j^{L}(T))^2 < \sum_{j=1}^{n} (\sigma_{j}^{L}(\overrightarrow{C}_n))^2 $.
Therefore, an oriented tree $T$ with outdegree sequence $D_2=[1^{n-1},0]$ and the directed cycle $\overrightarrow{C}_n$ cannot have the same Laplacian singular values.

For oriented trees with $n-2$ positive outdegrees, the possible outdegree sequence is $D_{3} = [2^{1}, 1^{n-3}, 0^2]$. For each oriented tree $T$ with outdegree sequence $D_3$, we see
\begin{align*}
	Zg^{+}(T) &= 2^2 + (n-3)(1^2) = 4 + n - 3 = n+1 
	~~\text{and} ~~	a(T) = n-1.
\end{align*}
It follows that
\[ \sum_{j=1}^{n} (\sigma_j^{L}(T))^2 = Zg^{+}(T) + a(T) = (n+1) + (n-1) = 2n=\sum_{j=1}^{n} (\sigma_{j}^{L}(\overrightarrow{C}_n))^2. \]
In this case, further investigation is required, and we utilize the interlacing property of singular values.\\
For $n=4$, the outdegree sequence $[2, 1, 0^2]$ corresponds to the three distinct oriented trees $T_1, T_2,$ and $T_3$ shown in Figure 2. Using matlab, we see that $\sigma_{1}^{L}(T_1)=2.61$, $\sigma_{1}^{L}(T_2)=2.49$ and $\sigma_{1}^{L}(T_3)=2.49$.

\begin{figure}[ht]
	
	\begin{minipage}{0.3\textwidth}
		\centering
		\begin{tikzpicture}[node distance={15mm}, main/.style = {draw, circle, inner sep=2pt}] 
			\node[main] (v1) {$v_1$}; 
			\node[main] (v2) [above left of=v1] {$v_2$}; 
			\node[main] (v3) [above right of=v1] {$v_3$}; 
			\node[main] (v4) [below of=v1] {$v_4$};
			\draw [->, >={Stealth}] (v4) -- (v1);
			\draw [->, >={Stealth}] (v1) -- (v2);
			\draw [->, >={Stealth}] (v1) -- (v3);
		\end{tikzpicture}
		\centerline{$T_1$}
	\end{minipage}
	\hfill
	\begin{minipage}{0.3\textwidth}
		
		\begin{tikzpicture}[node distance={15mm}, main/.style = {draw, circle, inner sep=2pt}] 
			\node[main] (v1) {$v_1$}; 
			\node[main] (v2) [above left of=v1] {$v_2$}; 
			\node[main] (v3) [above right of=v1] {$v_3$}; 
			\node[main] (v4) [below of=v3] {$v_4$};
			\draw [->, >={Stealth}] (v1) -- (v2);
			\draw [->, >={Stealth}] (v1) -- (v3);
			\draw [->, >={Stealth}] (v3) -- (v4);
		\end{tikzpicture}
		\centerline{$T_2$}
	\end{minipage}
	\hfill
	\begin{minipage}{0.3\textwidth}
		\centering
		\begin{tikzpicture}[node distance={15mm}, main/.style = {draw, circle, inner sep=2pt}] 
			\node[main] (v1) {$v_1$}; 
			\node[main] (v2) [above left of=v1] {$v_2$}; 
			\node[main] (v3) [above right of=v1] {$v_3$}; 
			\node[main] (v4) [below of=v3] {$v_4$};
			\draw [->, >={Stealth}] (v1) -- (v2);
			\draw [->, >={Stealth}] (v1) -- (v3);
			\draw [->, >={Stealth}] (v4) -- (v3);
		\end{tikzpicture}
		\centerline{$T_3$}
	\end{minipage}
	
	\vspace{0.5cm}
	\caption{Oriented trees on 4 vertices with outdegree sequence $[2, 1, 0, 0]$.}
	\label{fig:oriented_trees}
\end{figure}
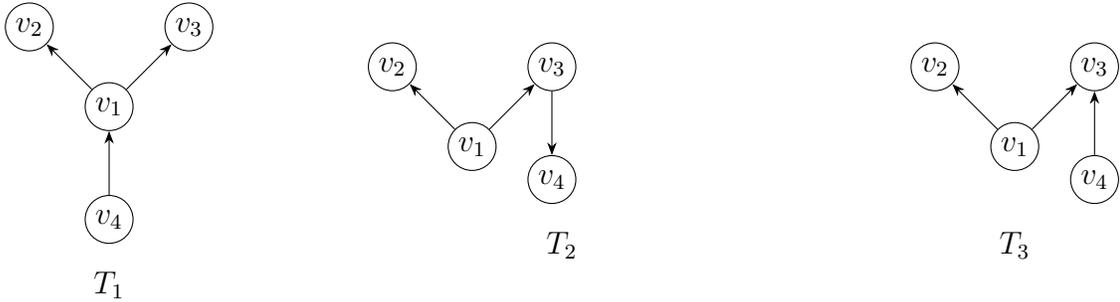

The $L$-spectral norm of the oriented cycle $\overrightarrow{C}_n$ is given by
\begin{equation*}
	\sigma_{1}^L(\overrightarrow{C}_n) = 
	\begin{cases} 
		2 & \text{if } n \text{ is even,} \\
		2 \cos\left(\frac{\pi}{2n}\right) & \text{if } n \text{ is odd.}
	\end{cases}
\end{equation*}

Clearly, $\sigma_{1}^L(T_i) > \sigma_{1}^L(\overrightarrow{C}_4)$ and the result follows for $n=4$.\\
For $n \ge 5$, let $T \in \mathcal{T}(n)$ has the outdegree sequence $D_4 = [2, 1^{n-3}, 0^2]$. The possible principal submatrices of order $4$ of $L(T)$  are $L(T_i)$ for $i \in \{1, 2, 3\}$, where $T_i~i=1,2,3$ are shown in Figure $2$ with $\sigma_{1}^L(T_1)=2.61$, $\sigma_{1}^L(T_2)=2.49$, $\sigma_{1}^L(T_3)=2.49$,\\ 
and\\
$M_1 = L(T_1) + \text{diag}(1, 1, 0, 0),$ with $\sigma_{1}(M_1)=3.44$\\
$M_2 = L(T_1) + \text{diag}(1, 0, 0, 0),$ with  $\sigma_{1}(M_2)=3.45$\\
$M_3 = L(T_1) + \text{diag}(0, 1, 0, 0),$ with  $\sigma_{1}(M_3)=2.64$\\
$M_4 = L(T_2) + \text{diag}(0, 1, 0, 1),$ with  $\sigma_{1}(M_4)=2.53$\\
$M_5 = L(T_2) + \text{diag}(0, 1, 0, 0),$ with  $\sigma_{1}(M_5)=2.53$\\
$M_6 = L(T_2) + \text{diag}(0, 0, 0, 1),$ with $\sigma_{1}(M_6)=2.49$\\
$M_7 = L(T_3) + \text{diag}(0, 1, 1, 0),$ with $\sigma_{1}(M_7)=2.58$\\
$M_8 = L(T_3) + \text{diag}(0, 1, 0, 0),$ with $\sigma_{1}(M_8)=2.53$\\
$M_9 = L(T_3) + \text{diag}(0, 0, 1, 0).$ with $\sigma_{1}(M_9)=2.55$.

We note that $\sigma_{1}^L(T_i)>\sigma_{1}^L(\overrightarrow{C}_n)$ for $i=1,2,3$ and $\sigma_{1}(M_i) > \sigma_{1}^L(\overrightarrow{C}_n)$ for $i=1,2,\dots, 9$. Consequently, $\sigma_{1}^L(T)>\sigma_{1}^L(\overrightarrow{C}_n)$ for all oriented trees with outdegree sequence $D_3$.\\
For all oriented trees $T\in \mathcal{T}(n)$ with at most $n-3$ positive outdegrees, it is easy to see that $Zg^{+}(T)>n+1$ and consequently for any such oriented tree $$\sum_{j=1}^{n} (\sigma_j^{L}(T))^2>(n+1)+(n-1)=2n=\sum_{j=1}^{n} (\sigma_j^{L}(\overrightarrow{C}_n))^2.$$
Hence $\overrightarrow{C}_n$ and any oriented tree $T$ on $n$ vertices cannot have same singular values.\\ 
\noindent We show that no $U \in \mathcal{U}(n)\setminus\{\overrightarrow{C}_n\}$ and $\overrightarrow{C}_n$ have same singular values. Recall that if $x_1,x_2, \cdots x_n$ are non-negative integers such that $\sum_{i=1}^{n}x_{i}=n$ and $ 0 \le x_i \le n-1$, then $\sum_{i=1}^{n}x_{i}^{2}$ is minimized at $x_1=x_2=\dots=x_n=1$ and the minimum value is n. We restrict our comparison to $U \in \mathcal{U}(n) \setminus \{\overrightarrow{C}_n\}$ with outdegree sequence $D_4=[1^n]$ as for other $U \in \mathcal{U}(n) \setminus \{\overrightarrow{C}_n\}$, $Zg^{+}(U)>n$.

For $n=4$, the only such unicyclic digraph with outdegree sequence $[1^4]$ is $U_1$ given in Figure 3. The singular values of $U_1$ are $[2, 1.7321, 1, 0]$ whereas singular values of $L(\overrightarrow{C}_4)$ are $0,2 \sqrt{2},\sqrt{2}$.\\
For $n\ge 5$, such a unicyclic digraph $U$ must contain one of $U_2$, $U_3$, $U_4$, $T_4$ as an induced subdigraph on $5$ vertices and accordingly $L(U_2), L(U_3), L(U_4), L(T_4)$ will be a principal submatrix of $L(U)$. We note that $\sigma_1^L(U_2)=2.2361$, $\sigma_1^L(U_3)=2.042$, $\sigma_1^L(U_4)=2.0743$ and $\sigma_1^L(T_4)=2.0421$. So, by interlacing property $\sigma_1^L(U)>\sigma_1^L(\overrightarrow{C}_n)$. Hence, the result follows in this case.

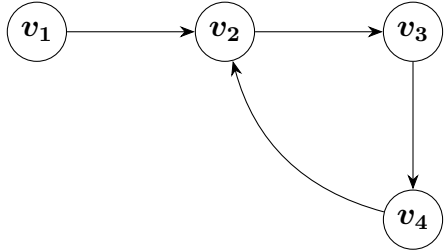
\begin{figure}[ht]
	\centering
	\begin{tikzpicture}[
		node distance={25mm}, 
		main/.style = {draw, circle, font=\boldmath, inner sep=3pt}
		] 
		\node[main] (v1) {$v_1$}; 
		\node[main] (v2) [right of=v1] {$v_2$}; 
		\node[main] (v3) [right of=v2] {$v_3$}; 
		\node[main] (v4) [below of=v3] {$v_4$};
		
		\begin{scope}[>={Stealth[scale=1.2]}, every edge/.style={draw, thick}]
			\draw [->] (v1) -- (v2);
			\draw [->] (v2) -- (v3);
			\draw [->] (v3) -- (v4);
			\draw [->] (v4) to [bend left=30] (v2);
		\end{scope}
		
	\end{tikzpicture}
	\caption{$U_1$, unicyclic digraph  with outdegree sequence $[1^4]$}
\end{figure}

\begin{figure}[ht]
	\centering
	\begin{minipage}{0.48\textwidth}
		\centering
		\begin{tikzpicture}[scale=0.7, node distance={18mm}, main/.style = {draw, circle, font=\small\boldmath, inner sep=2pt}] 
			\node[main] (v3) {$v_3$}; 
			\node[main] (v4) [above right of=v3, node distance=22mm] {$v_4$}; 
			\node[main] (v5) [below right of=v4, node distance=22mm] {$v_5$};
			\node[main] (v1) [above left of=v3] {$v_1$};
			\node[main] (v2) [below left of=v3] {$v_2$};
			\begin{scope}[>={Stealth[scale=1]}, every edge/.style={draw, thick}]
				\draw [->] (v1) -- (v3); \draw [->] (v2) -- (v3);
				\draw [->] (v3) -- (v4); \draw [->] (v4) -- (v5); \draw [->] (v5) -- (v3);
			\end{scope}
		\end{tikzpicture}
		\centerline{$U_2$}
	\end{minipage}
	\hfill
	\begin{minipage}{0.48\textwidth}
		\centering
		\begin{tikzpicture}[scale=0.7, node distance={18mm}, main/.style = {draw, circle, font=\small\boldmath, inner sep=2pt}] 
			\node[main] (v1) {$v_1$}; 
			\node[main] (v2) [right of=v1] {$v_2$}; 
			\node[main] (v3) [right of=v2] {$v_3$}; 
			\node[main] (v4) [above right of=v3, node distance=22mm] {$v_4$}; 
			\node[main] (v5) [below right of=v4, node distance=22mm] {$v_5$};
			\begin{scope}[>={Stealth[scale=1]}, every edge/.style={draw, thick}]
				\draw [->] (v1) -- (v2); \draw [->] (v2) -- (v3);
				\draw [->] (v3) -- (v4); \draw [->] (v4) -- (v5); \draw [->] (v5) -- (v3);
			\end{scope}
		\end{tikzpicture}
		\centerline{$U_3$}
	\end{minipage}
	
	\vspace{1cm} 
	
	\begin{minipage}{0.48\textwidth}
		\centering
		\begin{tikzpicture}[scale=0.7, node distance={20mm}, main/.style = {draw, circle, font=\small\boldmath, inner sep=2pt}] 
			\node[main] (v1) {$v_1$}; 
			\node[main] (v2) [right of=v1] {$v_2$}; 
			\node[main] (v3) [right of=v2] {$v_3$}; 
			\node[main] (v4) [below of=v3] {$v_4$};
			\node[main] (v5) [left of=v4] {$v_5$};
			\begin{scope}[>={Stealth[scale=1]}, every edge/.style={draw, thick}]
				\draw [->] (v1) -- (v2); \draw [->] (v2) -- (v3);
				\draw [->] (v3) -- (v4); \draw [->] (v5) -- (v4);
				\draw [->] (v4) to [bend left=30] (v2);
			\end{scope}
		\end{tikzpicture}
		\centerline{$U_4$}
	\end{minipage}
	\hfill
	\begin{minipage}{0.48\textwidth}
		\centering
		\begin{tikzpicture}[scale=0.7, node distance={18mm}, main/.style = {draw, circle, font=\small\boldmath, inner sep=2pt}] 
			\node[main] (v1) {$v_1$}; 
			\node[main] (v2) [right of=v1] {$v_2$}; 
			\node[main] (v3) [right of=v2] {$v_3$}; 
			\node[main] (v4) [right of=v3] {$v_4$};
			\node[main] (v5) [above of=v3] {$v_5$};
			\begin{scope}[>={Stealth[scale=1]}, every edge/.style={draw, thick}]
				\draw [->] (v1) -- (v2); \draw [->] (v2) -- (v3);
				\draw [->] (v3) -- (v4); \draw [->] (v5) -- (v3);
			\end{scope}
		\end{tikzpicture}
		\centerline{$T_4$}
	\end{minipage}
	
	\vspace{0.5cm}
	\caption{Possible subdigraphs $U_2, U_3, U_4$ and $T_4$ for $n \ge 5$.}
	\label{fig:subdigraphs}
\end{figure}

\noindent If $D$ is a digraph with $n$ vertices and $\ge n+1$ arcs, then $Zg^+({D}) + a({D})>2n$, and hence no such ${D}$ can have the same Laplacian singular values as those of $\overrightarrow{C}_n$. \\

\noindent If ${D}$ is an acyclic digraph with $n$ vertices and $n$ arcs, then the outdegree sequence of ${D}$ differs from $[1^n]$ and $Zg^+({D})>n$.\\

\noindent If ${D}$ is not connected and has $k \ge 2$ components, since $L(\overrightarrow{C}_n)$ has $0$ as a singular value with multiplicity 1, whereas ${D}$ has $0$ as a singular value with multiplicity at least $k \ge 2$ and so $D$ and $\overrightarrow{C}_n$ cannot have same singular values.\\
\indent We next show that $\overrightarrow{P}_n$ is determined by $L$ singular values. We have
\begin{align*}
	\sum_{j=1}^n (\sigma_{j}^L(\overrightarrow{P}_n))^2 &= Zg^+(\overrightarrow{P}_n) + a(\overrightarrow{P}_n) \\
	&= (n-1) + (n-1) = 2n-2.
\end{align*}

\noindent We note that it is sufficient to compare $\overrightarrow{P}_n$ only with those oriented trees whose outdegree sequence is $[1^{n-1}, 0]$. The Laplacian singular values of $\overrightarrow{P}_n$ are
\[ 2\sin\left(\frac{\pi j}{2n}\right); \quad j=0, 1, 2, \dots, n-1, \]
and so $\sigma_1^{L}(\overrightarrow{P}_n)=2\cos{\frac{\pi}{2n}}<2$.\\ 
\noindent For $n=4$, one can verify the result directly. For $n \ge 5$, any $T \in \mathcal{T}(n) \setminus \{\overrightarrow{P}_n\}$ with outdegree sequence $[1^{(n-1)}, 0]$ must contain one of the sub-oriented trees shown in Figure 5. For any such oriented tree $T$, the Laplacian matrix $L(T)$ has one among $ L(T_5),~ L(T_6),~ L(T_7),~ M_{10}=L(T_7)+\text{diag}(1,0,0,0,0)$ as principal submatrix. It is easy to see that $\sigma_1^{L}(T_5)=\sqrt{5},~ \sigma_1^{L}(T_6)=2.0421, ~\sigma_1^{L}(T_7)= 2.0421, ~\sigma_1(M_{10})=2.0491$. By interlacing property, for any oriented tree $T\in\mathcal{T}(n) \setminus \{\overrightarrow{P}_n\}$ with outdegree sequence $[1^{n-1},0]$, $\sigma_1^{L}(T)>2$ and so it has Laplacian singular values different from those of $\overrightarrow{P}_n$.

\begin{figure}[ht]
	\centering
	\begin{minipage}{0.32\textwidth}
		\centering
		\begin{tikzpicture}[scale=0.8, main/.style = {draw, circle, inner sep=2pt, font=\small}]
			\node[main] (v1) {$v_1$};
			\node[main] (v2) [right of=v1] {$v_2$};
			\node[main] (v3) [right of=v2] {$v_3$};
			\node[main] (v4) [above of=v2] {$v_4$};
			\node[main] (v5) [below of=v2] {$v_5$};
			\begin{scope}[>={Stealth[scale=1]}, every edge/.style={draw, thick}]
				\draw [->] (v1) -- (v2);
				\draw [->] (v2) -- (v3);
				\draw [->] (v4) -- (v2);
				\draw [->] (v5) -- (v2);
			\end{scope}
		\end{tikzpicture}
		\centerline{$T_5$}
	\end{minipage}
	\hfill
	\begin{minipage}{0.32\textwidth}
		\centering
		\begin{tikzpicture}[scale=0.8, main/.style = {draw, circle, inner sep=2pt, font=\small}]
			\node[main] (v1) {$v_1$};
			\node[main] (v2) [right of=v1] {$v_2$};
			\node[main] (v3) [right of=v2] {$v_3$};
			\node[main] (v4) [right of=v3] {$v_4$};
			\node[main] (v5) [above of=v3] {$v_5$};
			\begin{scope}[>={Stealth[scale=1]}, every edge/.style={draw, thick}]
				\draw [->] (v1) -- (v2);
				\draw [->] (v2) -- (v3);
				\draw [->] (v3) -- (v4);
				\draw [->] (v5) -- (v3);
			\end{scope}
		\end{tikzpicture}
		\centerline{$T_6$}
	\end{minipage}
	\hfill
	\begin{minipage}{0.32\textwidth}
		\centering
		\begin{tikzpicture}[scale=0.8, main/.style = {draw, circle, inner sep=2pt, font=\small}]
			\node[main] (v1) {$v_1$};
			\node[main] (v2) [left of=v1] {$v_2$};
			\node[main] (v3) [left of=v2] {$v_3$};
			\node[main] (v4) [above left of=v3] {$v_4$};
			\node[main] (v5) [below left of=v3] {$v_5$};
			\begin{scope}[>={Stealth[scale=1]}, every edge/.style={draw, thick}]
				\draw [->] (v2) -- (v1);
				\draw [->] (v3) -- (v2);
				\draw [->] (v4) -- (v3);
				\draw [->] (v5) -- (v3);
			\end{scope}
		\end{tikzpicture}
		\centerline{$T_7$}
	\end{minipage}
	\caption{Oriented trees with outdegree sequence $[1^4, 0]$.}
\end{figure}
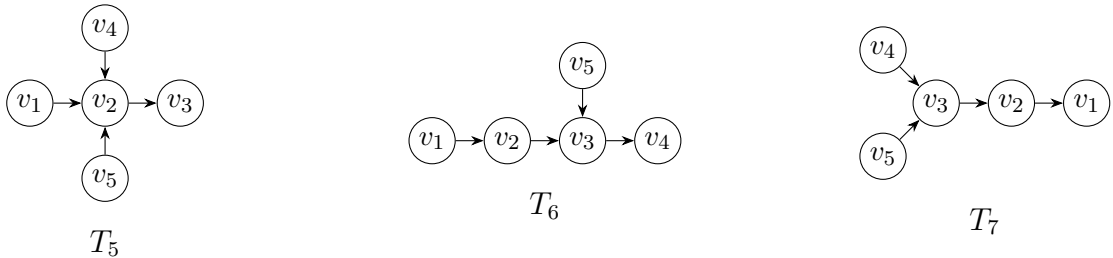
(ii). We will compare $\overrightarrow{C}_n$ with the class of unicyclic digraphs and proof for other cases is similar to part (i). From [Lemma $2.9$, \cite{bm}], we see that for any $U\in\mathcal{U}(n)$, $\sigma_1^Q(U)\ge 2$ with equality if and only if $U=\overrightarrow{C}_n$, hence no $U\in\mathcal{U}(n)\setminus\overrightarrow{C}_n$ and $\overrightarrow{C}_n$ have same $Q$ singular values.\\
The proof that $\overrightarrow{P}_n$ is determined by $Q$ singular values is similar.\\
(iii). By [Theorem 2.5, \cite{bm}], we see that $\overrightarrow{S}_n(n-1,0)$ is the unique digraph on $n$ vertices with rank of signless Laplacian matrix equal to $1$. A similar proof shows that this is also the unique digraph of order $n$ with rank of Laplacian matrix equal to $1$. Hence, $\overrightarrow{S}_n(n-1,0)$ is determined by its Laplacian and signless Laplacian singular values.\qed

\noindent{\bf Acknowledgements.} This research is supported by NBHM project No. 02011/42/2025/NBHM(RP)\\/R\&DII/16567 and by SERB-DST grant with File No. MTR/2023/000201. The research of Peer Abdul Manan is supported by CSIR, New Delhi, India with CSIR-HRDG Ref. No: Jan-Feb/06/21(i)EU-V. 

\end{document}